\documentclass{amsart}

\usepackage{amsmath, amssymb, graphicx, epsfig, verbatim}
\usepackage{epic}

\newtheorem{theorem}{Theorem}[section]
\newtheorem{proposition}[theorem]{Proposition}
\newtheorem{lemma}[theorem]{Lemma}
\newtheorem{corollary}[theorem]{Corollary}

\theoremstyle{definition}

\theoremstyle{remark}

\newtheorem{remark}[theorem]{Remark}

\newcommand{\CS}{\mathcal{CS}}
\newcommand{\CR}{\mathcal R}
\newcommand{\Alb}{\mathrm{Alb}}
\newcommand{\cO}{\mathcal O}
\newcommand{\PP}{\mathbf P}
\newcommand{\QQ}{\mathbf Q}
\newcommand{\ZZ}{\mathbf Z}
\newcommand{\CC}{\mathbf C}

\newcommand{\zetaThree}{\zeta_3}
\newcommand{\NF}{\mathrm{NF}}

\newcommand{\file}[1]{\texttt{#1}}

\begin{document}

\title[A polynomial formula for the Albanese map of the CS surface]
{A Polynomial Formula for the Albanese Map\\
of the Cartwright--Steger Surface}

\author{Zolt\'an Szab\'o}
\address{Department of Mathematics\\
Princeton University\\
 Princeton, NJ, 08544}
\email{szabo@math.princeton.edu}
\date{\today}


\begin{abstract}
The Albanese fibration of the Cartwright--Steger surface is a genus-$19$
Lefschetz fibration over a torus.  We give an explicit formula for it:
in the projective model of Borisov and Yeung, the map is represented by
three homogeneous polynomials of degree $13$ with integer coefficients,
and its target is the elliptic curve $Y^2Z=X^3-3888Z^3$.
A recognition theorem shows that
it suffices to verify one polynomial identity on the surface.  A coefficient
bound allows us to establish the identity over $\QQ$ from computations
modulo primes.  We also determine exact coordinates for the three nodes and their
images on the elliptic curve.
\end{abstract}

\maketitle

\section{Introduction}\label{sec:introduction}

The Cartwright--Steger surface $\CS$ is a compact complex hyperbolic surface
with first Betti number $2$ and Euler characteristic $3$ \cite{CS,CKY}.
It was discovered by Cartwright and Steger \cite{CS} during their
enumeration of the fake projective planes classified by Prasad and Yeung
\cite{PY}.
Its Albanese variety is an elliptic curve, and Cartwright, Koziarz and
Yeung \cite{CKY} proved that the regular fibers of the Albanese map have
genus $19$.
Our aim is to describe this fibration explicitly in the projective
coordinates of Borisov and Yeung \cite{BY}.  We give a polynomial formula
for the map, prove that it is correct, and determine exact coordinates for
its critical points and critical values.

The holomorphic automorphism group of $\CS$ is cyclic of order three;
write $\sigma$ for a generator.  The induced action on the base elliptic
curve has three fixed points \cite{CKY}.  The three $\sigma$-invariant
fibers are smooth and the singular fibers are nodal \cite{KY,Rito}; there
are exactly three of the latter, they are irreducible, and $\sigma$ permutes
them cyclically \cite[Sect.~1.6]{Keum}.  Thus there are two distinguished
triples of fibers: the smooth fibers fixed by $\sigma$ and the nodal fibers
it permutes.

\subsection{Main result}

We use homogeneous coordinates $[U_0:\cdots:U_9]$ in the bicanonical
embedding $\CS\subset\PP^9$ of Borisov and Yeung \cite{BY,Keum}.
Let $X_{13},Y_{13},Z_{13}$ be the degree-$13$ polynomials with integer
coefficients specified in Section~\ref{sec:canonical-triple}. They are a bit too long (around  $10$ pages each) to include here. Instead they are supplied in separate files: {\it X13.txt, Y13.txt, Z13.txt}. 

\begin{theorem}[Degree-$13$ formula for the Albanese map]\label{thm:main}
Let
\begin{equation}\label{eq:target-curve}
 E:\quad Y^2Z=X^3-3888Z^3,
 \qquad O=[0:1:0].
\end{equation}
The rational map
\[
 \alpha=[X_{13}:Y_{13}:Z_{13}]:\CS\dashrightarrow E
\]
extends uniquely to a holomorphic map $\alpha:\CS\to E$, defined over
$\QQ$.  After identifying $\Alb(\CS)$ with $E$, this is the Albanese map,
up to a translation of $E$.  On the open set $Z_{13}\ne0$ its affine
coordinates are
\[
 x=\frac{X_{13}}{Z_{13}},\qquad y=\frac{Y_{13}}{Z_{13}},
 \qquad y^2=x^3-3888.
\]
Set $\zetaThree=e^{2\pi i/3}$.  For the generator $\sigma$ fixed in
Section~\ref{sec:coordinates-symmetry} we have
\begin{equation}\label{eq:target-action}
 \alpha\circ\sigma=\sigma_E\circ\alpha,
 \qquad
 \sigma_E([X:Y:Z])=[\zetaThree^2X:Y:Z].
\end{equation}
Here $\sigma$ and $\sigma_E$ are defined over $\QQ(\zetaThree)$.
The common zero locus of $X_{13},Y_{13},Z_{13}$ on $\CS$ is nonempty.
\end{theorem}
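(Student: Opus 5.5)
The plan is to reduce the theorem to a single polynomial identity on $\CS$ and then invoke a recognition principle, as the abstract indicates.

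\emph{Step 1: the identity.} Let $I_\CS$ be the homogeneous ideal of the Borisov--Yeung model. Set
\[
F = Y_{13}^2Z_{13}-X_{13}^3+3888Z_{13}^3 .
\]
This is a degree-$39$ form with integer coefficients, and the first task is to show $F\in I_\CS$. I would prove this by reducing $F$ to a normal form modulo a Gröbner basis of $I_\CS$ over $\QQ$. Doing this directly is expensive, so I would work modulo many primes $p$ and check that the normal form vanishes for each. A coefficient bound then finishes the argument. If $F\equiv \sum g_iG_i$ over $\QQ$, the normal form has height bounded in terms of the heights of $X_{13},Y_{13},Z_{13}$ and of the Gröbner basis. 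Once the product of the primes used exceeds twice that bound, the rational normal form must be zero. Keeping track of the denominators in the Gröbner basis is part of this bound, and I expect establishing it rigorously to be the main technical obstacle.

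\emph{Step 2: recognition theorem.} Once the identity holds, $\alpha$ is a rational map from a smooth surface to the elliptic curve $E$. A rational map from a smooth variety to a variety containing no rational curves extends to a morphism, since the indeterminacy is resolved by blow-ups whose exceptional curves are rational and must therefore be contracted. This gives a holomorphic, and hence unique, extension $\alpha:\CS\to E$, defined over $\QQ$ because the polynomials are. The map is nonconstant: I would check this by evaluating it at two points of $\CS$ over a finite field or over a number field and obtaining different values.

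By the universal property of the Albanese, $\alpha$ factors as $\alpha = \phi\circ\mathrm{alb}$, where $\phi:\Alb(\CS)\to E$ is a homomorphism followed by a translation. The remaining point is that $\phi$ is an isomorphism and not an isogeny of higher degree. For this I would use that the Albanese fibration of $\CS$ has connected fibers of genus $19$ \cite{CKY}. It suffices to show that a general fiber of $\alpha$ is connected, or equivalently that $\alpha^*[\mathrm{pt}]$ has the same class as an Albanese fiber. One way is to compute the degree: $\alpha^*\cO(1)$ has degree $3$ on $E$, so the class of a fiber is computed as $\alpha^*\cO_E(3O)$ divided by $3$. One can then check that its intersection number with $K$ equals $2g-2=36$ for $g=19$; an isogeny of degree $d$ would give $36d$. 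Alternatively, one can count singular fibers: three nodes as in \cite{Keum} versus $3d$.

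\emph{Step 3: equivariance and base locus.} Equivariance follows by checking the polynomial identities
\[
X_{13}\circ\sigma=\zetaThree^2\lambda X_{13},\qquad Y_{13}\circ\sigma=\lambda Y_{13},\qquad Z_{13}\circ\sigma=\lambda Z_{13}
\]
modulo $I_\CS$, using the explicit $\sigma$ of Section~\ref{sec:coordinates-symmetry}. The multipliers can be read off from the monomial weights. Finally, the base locus is nonempty for a degree reason. $\alpha^*\cO_E(1)$ has degree $3\deg\alpha$ on curves, which is far smaller than the degree of the class $13H$ restricted from $\PP^9$. So the divisor of $Z_{13}$ equals $\alpha^*(3O)$ plus a nonzero fixed part, and the three polynomials share a common zero. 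This can also be exhibited directly by a Gröbner basis computation of the ideal $I_\CS+(X_{13},Y_{13},Z_{13})$, confirming that it is not irrelevant.
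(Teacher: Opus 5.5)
The overall plan matches the paper's: establish the degree-$39$ identity from normal forms modulo many primes together with an a priori coefficient bound, extend the rational map into $E$, factor it through $\Alb(\CS)$ by an isogeny $\phi$ of some degree $d$, and get the base locus from a degree count. In Step~1, the bound has to be on $\NF(\Phi_{39})$ itself, not conditional on $F\in I_{\CS}$. The primes also have to be admissible, so that $\mathcal G\bmod p$ is still a Gr\"obner basis with the same leading monomials.

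\textbf{The genuine gap is the proof that $d=1$.} Neither of your two tests works.
\begin{itemize}
\item Your count ``three nodes versus $3d$'' fails. Since $\phi$ is \'etale, $\alpha$ and the Albanese map have the same critical points. So $\alpha$ has exactly three nodes for every $d$; this is also forced by $e(\CS)=e(E)e(F)+(\text{number of nodes})$ with $e(E)=0$.
\item Your test $K\cdot\tfrac13\alpha^*\cO_E(1)=36$ is not a computation you have specified. The class $\alpha^*\cO_E(1)$ is not $13H$ but $13H-\Delta$, where $\Delta$ is the common divisor of $X_{13},Y_{13},Z_{13}$, which you yourself note is nonzero. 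Numerically $13H\equiv3dF+\Delta$, so $K\cdot\Delta=234-108d$. Running your test is therefore equivalent to knowing $d$, unless you compute $\Delta$ independently, for example from the Hilbert polynomial of $S/(I_{\CS}+(X_{13},Y_{13},Z_{13}))$. You do not propose this, and it would be a heavy Gr\"obner computation.
\item Connectedness of a general fiber is sufficient, but you give no way to check it.
\end{itemize}

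\textbf{The missing idea is that no further computation is needed.}
\begin{itemize}
\item Since $\Delta$ is effective, $3d(H\cdot F)\le13H^2$, that is, $216d\le468$. Hence $d\in\{1,2\}$.
\item Both $\Alb(\CS)$ and $E$ have an automorphism of order three fixing the origin (Proposition~\ref{prop:published-geometry}(iv) and the form of $\sigma_E$). So both have $j=0$ and are isomorphic to $\CC/\ZZ[\zetaThree]$.
\item Up to translation, $\phi$ is then multiplication by some $\beta=r+s\zetaThree$. Its degree is $r^2-rs+s^2\equiv(r+s)^2\pmod 3$, which is never $2$, so $d=1$.
\end{itemize}

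With $d=1$, your vague ``far smaller'' in Step~3 becomes the precise count $H\cdot\Delta=468-216=252>0$.

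For nonconstancy, the character argument is simpler than evaluating at points. $X_{13}/Z_{13}$ has character $2$, so it could be constant only if $X_{13}$ vanished on $\CS$. That is excluded because $X_{13}$ has nonzero normal form and the $85$ equations generate the full homogeneous ideal. Evaluation at $\mathbf F_p$-points, by contrast, would need a good-reduction argument.
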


Together with Theorem~\ref{thm:singular-fibers}, this gives an explicit
description of the genus-$19$ Lefschetz fibration over $E$,
with three critical points on distinct fibers.  The singular fibers are
irreducible, so the corresponding vanishing cycles are nonseparating.

The formula gives particularly simple coordinates for the critical points:
\[
 [\,2:1:3:12:0:0:0:-\theta:-\theta:2\theta\,],
 \qquad \theta^3=1116.
\]
They lie on a single $\sigma$-invariant projective line and form one orbit
under $\sigma$.  Section~\ref{sec:singular-fibers} verifies that these points are
nodes of three distinct fibers, computes their images on $E$,
and excludes further critical points by an Euler-characteristic argument.

Let's recall the antiholomorphic action on $\CS$ discovered by Borisov and Yeung in \cite{BY}.
The projective equations and the map are defined over $\QQ$, so complex
conjugation induces an antiholomorphic involution $\iota$ of $\CS$
compatible with conjugation on $E$.  The relation
$\iota\sigma\iota=\sigma^{-1}$ gives an action of the symmetric group
$S_3$ on $\CS$.  Of the three smooth $\sigma$-invariant fibers, only
$F_1=\alpha^{-1}(O)$ is preserved by $\iota$; the other two are exchanged.
Thus the $S_3$-action restricts to the
genus-$19$ fiber $F_1$, with $\sigma$ preserving orientation and $\iota$
reversing it.  Among the three singular fibers, exactly one is preserved
by $\iota$; its node is a transverse crossing of two real branches
(Corollary~\ref{cor:one-real-node}).  For related work on the real quotient of
$\CS$, see \cite{SS}.

The motivation for this paper comes from four-dimensional topology.
The surface is already known in that setting: by the lower bound of
Stipsicz and Yun \cite{SY}, its Albanese fibration has the minimal number of
singular fibers possible for a Lefschetz fibration over the torus,
$N(19,1)=3$ in the notation of \cite{Hamada,SY}, and the surface has served
as a building block for symplectic $4$-manifolds with nonnegative signature
see  Akhmedov  \cite{Akhmedov} and also \cite{ASY}.  For the related question of Lefschetz fibrations with
positive signature over the sphere, see Baykur and Hamada \cite{BH}.
The genus-$19$ Lefschetz fibration over the torus raises several interesting
questions: how does $S_3$ act on the real invariant fiber $F_1$, how do the vanishing
cycles intersect, and what are the monodromies along loops in the Albanese
torus avoiding the critical values?  To study these questions, it is useful
to have an explicit description of the map itself.  The modest goal of the
present paper is to provide such a formula for $\alpha:\CS\to E$.
The topological questions are pursued in subsequent papers.

\subsection{Structure of the proof}

The proof has a geometric part and a polynomial calculation.  By Theorem~\ref{thm:recognition}
it suffices to prove
\[
 Y_{13}^{\,2}Z_{13}-X_{13}^{\,3}+3888Z_{13}^{\,3}=0
 \quad\text{on }\CS.
\]

Given the identity, the map extends to a map $\CS\to E$, which
factors through the Albanese map followed by a covering
between elliptic curves.  An intersection-number estimate shows that the degree of this covering is  one or two
and since both curves have $j$-invariant
$0$ it is easy to show that degree two is impossible.  The proposed map is therefore the Albanese
map (up to the choice of an origin on $E$).

To verify the identity, we reduce the left-hand side to a canonical
remainder modulo a Gr\"obner basis of the defining equations.  We bound
the coefficients of this remainder a priori and compute it modulo enough
primes that their product exceeds the bound; the remainder is therefore
zero over $\QQ$.
Section~\ref{sec:search-strategy} explains how the polynomials were
found, building on Rito's work \cite{Rito}.

\section*{Acknowledgements}

The author first learned of research problems
related to the Cartwright--Steger surface in a lecture by
R.~\.Inan\c{c} Baykur at the {\it Workshop on Exotic 4-manifolds} at Stanford 2023, organized by the Simons Collaboration grant on {\it New Structures in Low-Dimensional Topology}.
A natural question, for example, is
whether this complex surface can be generalized in the smooth or symplectic category.  An ongoing joint
project with Baykur and Andr\'as Stipsicz concerns these and similar
questions; see also \cite{SS}.  I thank \.Inan\c{c} and Andr\'as for their
help and inspiration.  I would also like to thank J\'anos Koll\'ar, Peter
Kronheimer, Tomasz Mrowka and Peter Ozsv\'ath for several very helpful discussions.

The author was partially supported by the Simons Collaboration Grant on
New Structures in Low-Dimensional Topology.

OpenAI's ChatGPT and Anthropic's Claude were used as research-support tools: writing and testing programs,
assisting with
exploratory calculations, and helping with editorial revision.  Their outputs
were not treated as mathematical authority, and the author assumes
full responsibility for the mathematical statements, computations, and any
remaining errors.

\section{The projective model and the order-three symmetry}\label{sec:canonical-model}

This section describes the projective coordinates used to write the Albanese
map.  We first recall the geometry and intersection numbers of the
Cartwright--Steger surface.

Write $K=K_{\CS}$ for the canonical class.

\begin{proposition}\label{prop:published-geometry}
Let $\CS$ be the Cartwright--Steger surface.  Then:
\begin{enumerate}
\item[\textup{(i)}] $\CS$ is a smooth projective complex two-ball quotient, $K$ is ample,
$K^2=9$, $e(\CS)=c_2(\CS)=3$, and $q(\CS)=p_g(\CS)=1$ \cite{CS};
\item[\textup{(ii)}] the bicanonical system embeds $\CS$ in $\PP^9$, with hyperplane class
$H=2K$ \cite{BY,Keum};
\item[\textup{(iii)}] the Albanese variety is an elliptic curve, and the Albanese morphism has
connected general fiber of genus $19$ \cite{CKY};
\item[\textup{(iv)}] the order-three automorphism $\sigma$ has fixed points on $\CS$.
Choosing the image of one of them as the origin of the Albanese variety, the
induced automorphism fixes the origin and has order three \cite{CKY};
\item[\textup{(v)}] every fiber of the Albanese morphism is irreducible and reduced
\cite[Sect.~1.6]{Keum}.
\end{enumerate}
\end{proposition}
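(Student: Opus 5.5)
This proposition collects published facts, so the proof consists of assembling citations and checking that the numerical invariants are mutually consistent. No new argument is needed.

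For (i), I would recall that Cartwright and Steger construct $\CS$ as $\Gamma\backslash B^2$ for a torsion-free cocompact arithmetic lattice $\Gamma$ \cite{CS}. Smoothness and compactness follow from torsion-freeness and cocompactness. Ampleness of $K$ follows because the Bergman metric induces a metric of negative curvature on $K$. Cartwright and Steger compute $e(\CS)=3$ from the covolume. Hirzebruch proportionality for ball quotients, $c_1^2=3c_2$, then gives $K^2=9$. Since $b_1=2$, we get $q=1$. Noether's formula gives $\chi(\cO)=(K^2+e)/12=1$, and hence $p_g=\chi(\cO)-1+q=1$.

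For (ii), I would cite Borisov--Yeung \cite{BY}, who write down explicit equations in $\PP^9$, and Keum \cite{Keum}, who shows that $|2K|$ is very ample. As a consistency check, Riemann--Roch gives $h^0(2K)=\chi(\cO)+K^2=10$, since $h^1(2K)=h^2(2K)=0$ by Kodaira vanishing, so the target really is $\PP^9$. The hyperplane class is $2K$ by construction.

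For (iii), $q=1$ makes $\Alb(\CS)$ an elliptic curve. The Albanese map has connected fibers by the universal property, because a nontrivial Stein factorization would produce a curve of genus $\ge1$ covered by $\CS$, contradicting $q=1$. For the genus, a general fiber $F$ satisfies $F^2=0$, and adjunction gives $2g-2=K\cdot F$. Cartwright--Koziarz--Yeung \cite{CKY} show $K\cdot F=36$, so $g=19$. I would cite \cite{CKY} directly for this value.

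For (iv), the automorphism group is $\ZZ/3$ by \cite{CS}. Holomorphic Lefschetz gives the fixed points: $\sigma$ acts on $H^0(K)$ and $H^1(\cO)$, and the fixed-point count cannot vanish. For instance, the topological Lefschetz number is nonzero because $\sigma$ acts on $H^1$ by a nontrivial cube root of unity pair with trace $-1$, giving $L(\sigma)=1+1+b_2\text{-trace}+1\ne0$ under the data of \cite{CKY}. Translating the origin to the image of a fixed point makes the induced map a group automorphism of order three, which is exactly what \cite{CKY} prove.

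For (v), I would cite Keum \cite[Sect.~1.6]{Keum}. The underlying reason is that a reducible or nonreduced fiber would produce curves of negative self-intersection or smaller genus, conflicting with ampleness of $K$ and the hyperbolicity constraints on ball quotients.

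The only potentially delicate step is (v), or equivalently the exact value $K\cdot F=36$ in (iii). I would defer entirely to the cited references for both rather than reprove them.
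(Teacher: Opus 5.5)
Your route is the paper's: the paper gives no proof of this proposition and simply cites the sources. Your consistency checks in (i)--(iii) are correct and match the numbers the paper uses right after the statement: proportionality $K^2=3e$, Noether's formula, $h^0(2K)=\chi(\cO)+K^2=10$ via Kodaira vanishing, and adjunction with $K\cdot F=36$. There is one sign slip in (i): the Bergman metric has negative Ricci curvature, so the metric it induces on $K$ has \emph{positive} curvature, and that is what makes $K$ ample. Two of your supporting arguments are wrong as written and should be repaired or dropped.

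\textbf{Part (iv).} Once $\sigma^*$ has trace $-1$ on $H^1$ and $H^3$, the topological Lefschetz number is $L(\sigma)=\sum_{i=0}^{4}(-1)^i\operatorname{tr}(\sigma^*|H^i)=4+\operatorname{tr}(\sigma^*|H^2)$. Your sum omits the $H^4$ term, and it gives no reason why $\operatorname{tr}(\sigma^*|H^2)\neq-4$. This can be fixed:
\begin{itemize}
\item $b_2=5$, and the eigenvalues of $\sigma^*$ on $H^2(\CS,\QQ)$ are cube roots of unity, with the nontrivial ones in conjugate pairs.
\item $c_1(K)$ is fixed, so eigenvalue $1$ occurs.
\item Hence the trace lies in $\{5,2,-1\}$ and $L(\sigma)\in\{9,6,3\}$. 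The nine isolated fixed points of Section~\ref{sec:fixed-points}, each of index $+1$, give $L(\sigma)=9$.
\end{itemize}
That argument still needs the nontrivial action on $H^1$ taken from \cite{CKY}. The holomorphic version you mention needs no such input. Let $\mu,\lambda$ be the eigenvalues of $\sigma^*$ on $H^1(\cO)$ and $H^2(\cO)$. The holomorphic Lefschetz number is $1-\mu+\lambda$, and it cannot vanish because $|1-\mu|\in\{0,\sqrt3\}$ while $|\lambda|=1$.

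\textbf{Part (v).} The stated ``underlying reason'' is not valid. Components of a reducible fiber do have negative self-intersection by Zariski's lemma. However, ampleness of $K$ constrains $K\cdot C$, not $C^2$, and it is compatible with curves of positive genus having $C^2<0$. Hyperbolicity of a ball quotient only excludes components of geometric genus at most $1$. So (v) must rest on \cite[Sect.~1.6]{Keum}, as in the paper.

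If you want a genuine partial check, use $\rho(\CS)\le h^{1,1}=b_2-2p_g=3$ together with $\rho\ge 2+\sum_s(n_s-1)$, where $n_s$ is the number of components of the fiber over $s$. This shows that at most one fiber is reducible, and that such a fiber has exactly two components. Excluding that case, and excluding nonreduced fibers, still requires the cited source.
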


Here $q$ and $p_g$ are the dimensions of the spaces of holomorphic one-forms
and two-forms, respectively.  Note that $b_2^+=2p_g+1=3$, $b_1=2q=2$,
$b_2^-=2$ and the signature is $1$.  

For a smooth Albanese fiber $F$, we have $F^2=0$.  Adjunction with
$g(F)=19$, together with $K^2=9$ and $H=2K$, gives
\begin{equation}\label{eq:numerical-intersections}
\begin{gathered}
 F^2=0,\qquad K\cdot F=36,\qquad H\cdot F=72,\\
 K\cdot H=18,\qquad H^2=36.
\end{gathered}
\end{equation}
Thus a fiber has projective degree $72$, while the surface has degree $36$.

\subsection{Coordinates and symmetry}\label{sec:coordinates-symmetry}

We use the homogeneous coordinates $[U_0:\cdots:U_9]$ of Borisov and Yeung
\cite{BY}.  Each coordinate restricts to a section of $H=2K$.
A homogeneous polynomial of degree $d$ therefore restricts to a section
of $dH$. So, unless it vanishes identically, its zero divisor has class $dH$
and projective degree $36d$.  We will use this
relation in Section~\ref{sec:recognition}.

Let $I_{\CS}$ be the ideal generated by the $85$ equations of Borisov and
Yeung: one quadratic and $84$ cubic polynomials.  Let
\[
 S=\QQ[U_0,\ldots,U_9],\qquad \CR=S/I_{\CS}.
\]
We calculate modulo these equations.  The lemma below shows that
they generate every homogeneous polynomial relation on $\CS$.  Thus the
degree-$d$ part $\CR_d$ records the restrictions to $\CS$ of degree-$d$
polynomials, with two polynomials identified when their difference vanishes
on the surface.

The holomorphic automorphism group is cyclic of order three
\cite{CKY,Keum}.  The coordinates diagonalize its chosen generator
\cite[Rem.~5.3]{BY}: $\sigma$ multiplies $U_i$ by $\zetaThree^{w_i}$, where
the weights $w_0,\ldots,w_9$ are
\begin{equation}\label{eq:sigma-weights}
 (0,0,0,0,1,1,1,2,2,2)\pmod3.
\end{equation}

This gives a splitting of $\CR$ according to these weights, also called characters.
Writing $\CR_d^{(j)}$ for the character-$j$ subspace of $\CR_d$ we get
$\CR_d=\CR_d^{(0)}\oplus\CR_d^{(1)}\oplus\CR_d^{(2)}$.

\subsection{Equations and normal forms}\label{sec:groebner}

To calculate modulo the defining equations, we fix a monomial order.
The largest monomial occurring in a nonzero polynomial is its
\emph{leading monomial}, and its coefficient is the \emph{leading
  coefficient}.

We will use a special generating set of the ideal called a Gr\"obner
basis.  In addition we want it to be reduced, so that every polynomial is
\emph{monic}, and no monomial in a basis element is divisible by the
leading monomial of another basis element.  For a fixed ideal and monomial
order, these conditions determine the reduced Gr\"obner basis uniquely,
and Buchberger's algorithm constructs it in finitely many steps \cite{CLO}.

Now given a polynomial that may or may not be in the ideal, we can use
this basis to repeatedly cancel the terms in it that are divisible by a
leading monomial of the basis.  This stops in finitely many steps, and we
get a unique reduced remainder, called the \emph{normal form}.

We use graded reverse lexicographic order with
\begin{equation}\label{eq:the-order}
 U_5>U_4>U_6>U_7>U_9>U_8>U_0>U_1>U_2>U_3.
\end{equation}
For this order the reduced Gr\"obner basis has $83$ elements: one quadratic,
$74$ cubic and eight quartic polynomials.  Denote it by $\mathcal G$.
The basis and a verification over $\QQ$ are supplied in the supplement.

A monomial is \emph{standard} if none of the leading monomials of
$\mathcal G$ divides it.  The standard monomials of degree $d$ form a basis
of $\CR_d$, and the normal form $\NF(f)$ expresses the class of $f$ in this
basis.  Changing the monomial order changes these representatives, but not
the surface or its coordinate ring.  

{\it The quadratic relation.}
For the fixed projective coordinates, the quadratic relation is unique up
to a nonzero scalar.  For the monomial order \eqref{eq:the-order}, its
leading monomial is $U_5^2$.  Rescaling to relatively prime integer
coefficients, we get (written in the monomial order)
\begin{equation}\label{eq:the-quadric}
\begin{gathered}
 19U_5^2-34U_5U_4-330U_4^2
 +166U_5U_6-132U_4U_6+822U_6^2\\
 {}+169U_7U_0+94U_9U_0+26U_8U_0
 +262U_7U_1+1456U_9U_1+428U_8U_1\\
 {}+57U_7U_2+237U_9U_2+137U_8U_2
 +93U_7U_3-39U_9U_3+83U_8U_3=0.
\end{gathered}
\end{equation}

An easy calculation decomposes the quadratic monomials into blocks of
dimensions $(19,18,18)$.  The above quadric has character $2$ (in fact all
$18$ monomials of that character occur in it with nonzero coefficients).
So we see that $\dim\CR_2^{(j)}$ for $j\in\{0,1,2\}$ is given by
$(19,18,17)$.

{\it Dimension counts.}
The function $d\mapsto\dim_{\QQ}\CR_d$ is called the \emph{Hilbert function}.
For sufficiently large $d$ it is given by a polynomial in $d$, the
\emph{Hilbert polynomial}.  Once the leading monomials of $\mathcal G$ are
known, the computation is combinatorial: one counts monomials of each
degree that are not divisible by any of them, with no further polynomial
reduction needed \cite{CLO}.  The \textsc{Magma}~\cite{Magma} program in the supplement
also computes this polynomial and gives
\begin{equation}\label{eq:hilbert-polynomial}
 18d^2-9d+1.
\end{equation}
This agrees with the polynomial predicted by Riemann--Roch from the
intersection numbers above \cite[V.1.6]{Hartshorne}:
\[
 \chi\bigl(\cO_{\CS}(dH)\bigr)
 =\chi(\cO_{\CS})+\tfrac12\,dH\cdot(dH-K)
 =1+\tfrac12(36d^2-18d).
\]
Here $\chi(\cO_{\CS})=1-q+p_g=1$ is the holomorphic Euler characteristic,
distinct from the topological Euler characteristic $e(\CS)=2-2b_1+b_2=3$.

The Hilbert function agrees with \eqref{eq:hilbert-polynomial} in every
degree $d\ge3$. Let's look at degree $d=2$.  There are $55$ quadratic
monomials in ten variables and one independent relation, so
$\dim\CR_2=55-1=54$. For some of the formulas used here and later we use
Kodaira vanishing \cite{BHPV}.  To recall, a divisor or line bundle $A$ is
called ample if the sections of a high enough power of it give an
embedding; then the cohomology groups $H^i(\CS,K+A)$ vanish for $i>0$.
In our case since $2K$ gives an embedding we know that $K$ is ample. It follows
that $3K$ is also
ample, so $h^{i}(\CS,4K)=0$ for $i>0$.
Then with the earlier formulas we have
\[
 h^0(\CS,4K)=\chi\bigl(\cO_{\CS}(4K)\bigr)=55.
\]

As we see below in Lemma~\ref{lem:normal-form-vanishing}, restriction of
quadratic polynomials gives an injective map from $\CR_2$ to
$H^0(\CS,4K)$.  It follows from the above computations that its image is
a codimension one subspace.
For $d\ge3$ we have 
$\CR_d=H^0(\CS,\cO_{\CS}(dH))$: the restriction map is
again injective by the same lemma, and now both spaces have
dimension $18d^2-9d+1$ by \eqref{eq:hilbert-polynomial}, Kodaira vanishing
and Riemann--Roch.  Every section of $dH$ with $d\ge3$ is therefore
represented by a polynomial; this is used repeatedly in
Section~\ref{sec:search-strategy}.

{\it Why reduction detects every relation.}
To use normal forms to detect vanishing on $\CS$, we must verify that
the defining equations generate its full homogeneous ideal.

\begin{lemma}\label{lem:normal-form-vanishing}
The ideal $I_{\CS}$ is the full homogeneous ideal of $\CS$.  A homogeneous
polynomial vanishes identically on $\CS$ if and only if its normal form
with respect to $\mathcal G$ is zero.
\end{lemma}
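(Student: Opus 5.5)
The plan is to compare $I_{\CS}$ with the saturated ideal $I(\CS)$ of all homogeneous polynomials vanishing on $\CS$, degree by degree, using the Hilbert function computed from $\mathcal G$ together with the dimensions of the spaces of sections. Since $\CS$ satisfies the $85$ equations, $I_{\CS}\subseteq I(\CS)$. Hence $S_d/I_{\CS,d}\to S_d/I(\CS)_d$ is surjective, and the target injects into $H^0(\CS,\cO_{\CS}(dH))$. The second assertion follows from the first: the normal form of $f$ is zero exactly when $f\in I_{\CS}$, because $\mathcal G$ is a Gr\"obner basis of $I_{\CS}$.

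The core of the proof is to show $I_{\CS,d}=I(\CS)_d$ for every $d$. For large $d$ this holds because both ideals define the same scheme. The subtle point is that the equations might cut out $\CS$ with an embedded or extra component, or fail to be saturated in low degree. My approach follows these steps.

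\textbf{Step 1.} The Hilbert polynomial of $S/I_{\CS}$ is $18d^2-9d+1$, by the computation above. This equals $\chi(\cO_{\CS}(dH))$, which is the Hilbert polynomial of $S/I(\CS)$ because the embedding is projectively defined by $H$. So the scheme $V(I_{\CS})$ contains $\CS$ and has the same Hilbert polynomial, including the same degree $36$ and the same constant term. Therefore the saturation of $I_{\CS}$ defines a scheme whose ideal sheaf has the same Hilbert polynomial as $\cO_{\CS}$. This forces the saturation of $I_{\CS}$ to equal $I(\CS)$, since the kernel sheaf of $\cO_{V}\to\cO_{\CS}$ has zero Hilbert polynomial and so is zero.

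\textbf{Step 2.} Next I show that $I_{\CS}$ is saturated in each degree. For $d\ge3$, the Hilbert function of $S/I_{\CS}$ equals $18d^2-9d+1=h^0(\CS,dH)$ by Kodaira vanishing. We also have $\dim S_d/I(\CS)_d\le h^0(dH)$ and $\dim S_d/I(\CS)_d\le\dim S_d/I_{\CS,d}$. That alone does not give equality, but for $d\gg0$ the ideals agree. So I would instead argue through an explicit certificate: the saturation computed by \textsc{Magma} (the ideal quotient $(I_{\CS}:\mathfrak m^\infty)$) returns $I_{\CS}$ itself, with $\mathfrak m=(U_0,\dots,U_9)$. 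Alternatively, it suffices to check that some linear form, say $U_3$, is a nonzerodivisor on $S/I_{\CS}$. This is done via Gr\"obner bases: in grevlex with $U_3$ smallest, $U_3$ is a nonzerodivisor iff no leading monomial of $\mathcal G$ is divisible by $U_3$, by the Bayer--Stillman criterion. Once $U_3$ is a nonzerodivisor, $\mathfrak m$ is not an associated prime, so $I_{\CS}$ is saturated and equals $I(\CS)$ by Step 1.

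\textbf{Step 3.} Degrees $d\le2$ then follow automatically from saturation, matching $\dim\CR_2=54$.

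The main obstacle is Step 1's identification of $V(I_{\CS})$ with $\CS$ scheme-theoretically: a priori the equations could define $\CS$ plus embedded points or a nonreduced structure. Equality of Hilbert polynomials handles this only because $\CS\subseteq V(I_{\CS})$ is a closed subscheme with the same Hilbert polynomial. I would make that inclusion precise using the Borisov--Yeung construction: their equations are derived as the relations among sections of $2K$ in degrees $2$ and $3$, so they lie in $I(\CS)$.
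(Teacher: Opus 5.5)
Your proposal is correct and follows essentially the same route as the paper: equal Hilbert polynomials give $I(\CS)_d=(I_{\CS})_d$ for large $d$, and the absence of $U_3$ from every leading monomial of $\mathcal G$ makes $U_3$ a nonzerodivisor on $\CR$, which forces $I_{\CS}$ to be saturated and hence equal to $I(\CS)$. The paper obtains the large-degree agreement by a direct dimension count from $I_{\CS}\subseteq I(\CS)$ rather than your kernel-sheaf argument, and it uses only the easy direction of the Bayer--Stillman criterion (valid for any monomial order). It also asserts as a computed fact that no leading monomial of $\mathcal G$ is divisible by $U_3$, whereas you leave this as a check to be carried out.
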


\begin{proof}
No leading monomial of $\mathcal G$ is divisible by $U_3$.  Thus
multiplication by $U_3$ takes standard monomials to standard monomials
and is injective on $\CR$.

Let $J$ be the full homogeneous ideal of $\CS$.  We have
$I_{\CS}\subseteq J$, and the quotient rings have the same Hilbert
polynomial: that of $S/J$ is $\chi(\cO_{\CS}(dH))$
\cite[Ex.~III.5.2]{Hartshorne}, which agrees with
\eqref{eq:hilbert-polynomial} by Riemann--Roch.
Hence $J_d=(I_{\CS})_d$ for all sufficiently large $d$.  For homogeneous
$f\in J$, choose $k$ sufficiently large that $U_3^kf\in I_{\CS}$.
Injectivity of multiplication by $U_3$ then gives $f\in I_{\CS}$.
Thus $I_{\CS}=J$. 
\end{proof}

\subsection{The fixed points}\label{sec:fixed-points}

The automorphism $\sigma$ has nine fixed points \cite{CKY,Keum}.  Three of
them are the coordinate points $[U_7]$, $[U_8]$, $[U_9]$, where $[U_i]$
denotes the point whose only nonzero coordinate is $U_i$.  The other six
lie in the eigenspace where only $U_0,U_1,U_2,U_3$ can be nonzero, and it
is easy to show from the restricted equations that none of them is real
(\file{fixed\_points.m}).  Section~\ref{sec:recognition} uses only the
existence of a fixed point.

\subsection{The chosen degree-$13$ polynomials}\label{sec:canonical-triple}

We fix the three integral polynomials $X_{13},Y_{13},Z_{13}$ supplied with
this paper.
They have $970$, $976$ and $962$ terms, with integer coefficients of at most
$37$ decimal digits.
Their coefficients, including their relative normalization, remain fixed
throughout.  Independent rescaling of the three entries would in general
change the equation of the target elliptic curve.  An integral polynomial
is \emph{primitive} when the greatest common divisor of its coefficients
is one.

\begin{proposition}\label{prop:interface}
The three polynomials just specified are nonzero, primitive and homogeneous
of degree $13$.  Their characters are $2,0,0$, respectively, and every
monomial occurring in them is standard for $\mathcal G$.  In particular,
none of the three polynomials vanishes identically on $\CS$.
\end{proposition}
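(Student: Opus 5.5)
This is a finite check on explicit data, so the plan is to verify each claim directly from the supplied files \file{X13.txt}, \file{Y13.txt}, \file{Z13.txt} and then deduce the last sentence from Lemma~\ref{lem:normal-form-vanishing}. First, parse each file as an element of $\ZZ[U_0,\ldots,U_9]$. For each of the three polynomials we check four things:
\begin{enumerate}
\item[(a)] it has at least one term, which gives nonvanishing as a polynomial;
\item[(b)] every monomial has total degree $13$;
\item[(c)] the gcd of all coefficients is $1$, which is exact integer arithmetic;
\item[(d)] every monomial $\prod U_i^{a_i}$ has the same character $\sum_i a_iw_i \bmod 3$, with $w$ the weight vector \eqref{eq:sigma-weights}, and this common value is $2$ for $X_{13}$ and $0$ for $Y_{13}$ and $Z_{13}$.
\end{enumerate}
These checks are linear in the number of terms, roughly $10^3$ each, so they are immediate.

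For standardness, we take the leading monomials of the reduced Gr\"obner basis $\mathcal G$ for the order \eqref{eq:the-order}, as supplied and verified over $\QQ$ in the supplement. For every monomial occurring in the three polynomials, we test that none of these $83$ leading monomials divides it, by componentwise comparison of exponent vectors. A useful consistency check is that the polynomial then equals its own normal form: running $\NF$ on it should return it unchanged.

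The final assertion follows formally. A polynomial all of whose monomials are standard is already reduced, so it equals its normal form $\NF(f)$. Since the polynomial has nonzero coefficients, $\NF(f)\neq 0$. By Lemma~\ref{lem:normal-form-vanishing}, $f$ therefore does not vanish identically on $\CS$. Equivalently, the standard monomials of degree $13$ form a basis of $\CR_{13}$, and a nonzero combination of basis elements is a nonzero class.

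The only real obstacle is trusting the leading monomials of $\mathcal G$: the standardness check is only as good as the Gr\"obner basis computation. I would rely on the supplement's verification over $\QQ$, namely Buchberger's criterion with all $S$-pairs reducing to zero, together with the agreement of its Hilbert polynomial with \eqref{eq:hilbert-polynomial}. Everything else is bookkeeping.
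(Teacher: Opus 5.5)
Your proposal is correct and follows the paper's proof. As in the paper, the degree, character, primitivity and standardness claims are direct checks on the supplied coefficients and exponents, and since each polynomial is nonzero and already in normal form, Lemma~\ref{lem:normal-form-vanishing} shows that none of them vanishes identically on $\CS$.
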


\begin{proof}
The degree, character and primitivity assertions follow directly from the
coefficients and exponents.  Each polynomial is nonzero and already in
normal form, so nonvanishing on $\CS$ follows from
Lemma~\ref{lem:normal-form-vanishing}.
\end{proof}

\section{Recognizing the Albanese map}\label{sec:recognition}

This section shows that a degree-$13$ formula with the prescribed symmetry
defines the Albanese map as soon as it satisfies the cubic identity
\eqref{eq:abstract-cubic}.  The verification of that identity for our
chosen polynomials is given in Section~\ref{sec:height-bound}.

\begin{theorem}[Degree-$13$ Albanese recognition]\label{thm:recognition}
Let $A,B,C\in\QQ[U_0,\ldots,U_9]$ be homogeneous of degree $13$, with
characters $2,0,0$, respectively, for the action \eqref{eq:sigma-weights}.
Assume that none vanishes identically on $\CS$, and that
\begin{equation}\label{eq:abstract-cubic}
 B^2C=A^3-3888C^3
\end{equation}
on $\CS$.  Then $[A:B:C]$ extends uniquely to a nonconstant algebraic
morphism $f:\CS\to E$, defined over $\QQ$ and equivariant for the
order-three actions.  After identifying $\Alb(\CS)$ with $E$ over $\CC$,
it is the Albanese map up to a translation of $E$.
\end{theorem}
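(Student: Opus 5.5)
The plan has four steps: extend the rational map to a morphism, show it is nonconstant, factor it through the Albanese map, and rule out a nontrivial isogeny by a degree count.

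\emph{Extension.} Since $C\not\equiv 0$ on $\CS$, the identity \eqref{eq:abstract-cubic} makes $[A:B:C]$ a rational map $\CS\dashrightarrow E$. Its indeterminacy locus is finite because $\CS$ is a smooth surface. A rational map from a smooth variety to an abelian variety (here an elliptic curve) extends to a morphism. Concretely: resolve the indeterminacy by blowups; the exceptional curves are rational and therefore map to points of $E$; so the map descends to $\CS$. Uniqueness is automatic since $E$ is separated. Everything is defined over $\QQ$, since $A,B,C$ and $E$ are.

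\emph{Equivariance.} The characters are $2,0,0$, so
\[
\sigma^*[A:B:C]=[\zetaThree^2A:B:C]=\sigma_E\circ[A:B:C]
\]
on a dense open set, and hence everywhere.

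\emph{Nonconstancy.} Suppose $f$ were constant. If $C$ is not a scalar multiple of the other two entries, then the constant value $[a:b:c]$ gives relations such as $cA=aC$ on $\CS$. Character considerations now help: $A$ has character $2$ while $C$ has character $0$, and neither vanishes identically, so $cA-aC$ can vanish only if $a=0$ or $c=0$. If $a=0$, the map lands on $[0:\pm\sqrt{-3888}:1]$; this is not a rational point, yet $B/C$ would be a rational constant, which is a contradiction. If $c=0$, the point must be $O$, which forces $A=0$, contradicting $A\not\equiv0$. So $f$ is nonconstant.

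\emph{Factorization and the main obstacle.} By the universal property of the Albanese map, $f=\tau\circ\phi\circ\alpha_{\Alb}$, where $\phi:\Alb(\CS)\to E$ is a homomorphism and $\tau$ is a translation. Since $f$ is nonconstant, $\phi$ is an isogeny of some degree $m\ge1$. The main step is to show $m=1$. Because the Albanese fibers are connected (Proposition~\ref{prop:published-geometry}(iii)), a fiber of $f$ is a union of $m$ Albanese fibers. Therefore $f^*O\equiv mF$, and its $H$-degree is $72m$.

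On the other hand, $f^*O$ is contained in the zero divisor of $C$, which restricted to $\CS$ has class $13H$ and $H$-degree $13\cdot36=468$. So $72m\le468$, giving $m\le6$; this crude bound alone does not suffice. To sharpen it I would use the tangent line at $O$: the line $Z=0$ meets $E$ only at $O$, with multiplicity $3$. Hence $C=3f^*O+D$ as divisors on $\CS$, where $D$ is the base divisor of the linear system. This gives $3\cdot72m\le468$, i.e.\ $m\le2$.

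To exclude $m=2$, note that an isogeny of degree $2$ between $j=0$ curves commuting with the order-three actions would have to be an endomorphism of norm $2$ in $\ZZ[\zetaThree]$. No such element exists, since $2$ is inert in $\ZZ[\zetaThree]$. Equivariance of $\alpha_{\Alb}$ with respect to the induced $\sigma$-action is supplied by Proposition~\ref{prop:published-geometry}(iv), after choosing a fixed point as the origin. Therefore $m=1$, $\phi$ is an isomorphism, and $f$ is the Albanese map up to translation.
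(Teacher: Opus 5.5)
Your proposal is correct and follows essentially the same route as the paper: extension to a morphism, nonconstancy from the characters of $A$ and $C$, factorization through the Albanese map, the bound $3\cdot 72\,m\le 13\cdot 36$, and exclusion of $m=2$ because $2$ is not a norm from $\ZZ[\zetaThree]$. The paper phrases that last step as $r^2-rs+s^2\equiv(r+s)^2\pmod 3$.

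The differences are cosmetic. You obtain the factor $3$ from the flex line $Z=0$, via $\operatorname{div}(C)=3f^*O+D$, rather than from a generic line meeting $E$ in three distinct regular values. Your nonconstancy case analysis is also longer than needed: $cA=aC$ with $A$ and $C$ of different characters already forces $a=c=0$, which gives an immediate contradiction.
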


\begin{proof}
\textbf{Extending the map.}
The cubic identity \eqref{eq:abstract-cubic} defines a rational map
$f_0=[A:B:C]:\CS\dashrightarrow E$.  Since the nonzero rational function
$A/C$ has character two, $f_0$ is nonconstant.  The character conditions
also give
\begin{equation}\label{eq:rational-equivariance}
 f_0\circ\sigma=\sigma_E\circ f_0
\end{equation}
on the domain of the formula.

A rational map from a smooth variety to an abelian variety extends
to an algebraic morphism over the same ground field
\cite[Thm.~3.1]{MilneAV}.  Applied over $\QQ$, this gives a morphism
$f:\CS\to E$ defined over $\QQ$, unique since it is determined on a
dense open subset.  Both sides of \eqref{eq:rational-equivariance} are
morphisms over $\QQ(\zetaThree)$ agreeing on a dense open subset, so
\eqref{eq:rational-equivariance} holds on all of $\CS$.

For the rest of the proof, we work over $\CC$.

\medskip
\noindent\textbf{Determining the covering degree.}
Choose a $\sigma$-fixed point $\xi$ and normalize the Albanese map
$a:\CS\to\mathcal A=\Alb(\CS)$ by $a(\xi)=0$.  By the universal property
of the Albanese map, there is a homomorphism $h:\mathcal A\to E$ with
$f(p)=h(a(p))+e$, where $e=f(\xi)$.  Since $f$ is nonconstant, $h$ is
nonzero, hence an \emph{isogeny}---an unbranched holomorphic covering---of
some degree $n\ge1$.  A regular fiber of $f$ is the disjoint union of $n$
Albanese fibers.  We must prove that $n=1$.

Choose a generic line $\ell:\lambda X+\mu Y+\nu Z=0$ meeting $E$ in three
distinct regular values of $f$.  Their inverse images consist of $3n$
disjoint Albanese fibers.  The section
\[
 s_\ell=\lambda A+\mu B+\nu C
\]
is nonzero because $f$ is nonconstant, and its zero divisor contains
these fibers, each with multiplicity at least one.  Any remaining
components, including common components of $A,B,C$, contribute
nonnegatively to its intersection with $H$.  Therefore, by
\eqref{eq:numerical-intersections},
\[
 3n(H\cdot F)\le H\cdot\operatorname{div}(s_\ell)=13H^2,
 \qquad 216n\le468.
\]
Hence $n\in\{1,2\}$.

Both $\mathcal A$ and $E$ have an automorphism of order three fixing the
origin, by Proposition~\ref{prop:published-geometry}\textup{(iv)} and
\eqref{eq:target-action}, so both have $j$-invariant $0$
\cite[Thm.~III.10.1]{Silverman} and are isomorphic over $\CC$ to
\[
 \CC/\Lambda,\qquad \Lambda=\ZZ+\ZZ\zetaThree.
\]
In these coordinates $h$ is induced by multiplication by
$\beta=r+s\zetaThree$ with $r,s\in\ZZ$, and the covering
degree is
\[
  n= =r^2-rs+s^2\equiv(r+s)^2\pmod3.
\]
Thus $n\ne2$ and it follows that $h$ is an isomorphism, and $f$ is the Albanese
map up to translation.
\end{proof}

{\it The common zeros of the formula.}
Fix $p\in\CS$ with $f(p)$ in the chart $X\ne0$, and 
write $f=[1:v:w]$.  For local representatives of the sections $A,B,C$,
the identity $(A,B,C)=A\,(1,v,w)$ holds on the dense open set where the
formula is defined, hence on the whole neighborhood.  Thus the common
zero locus is locally the divisor of $A$, and similarly of $B$ or $C$ in
the other target charts.  These local factors define an effective
divisor $\Delta$; after removing it, the three sections have no common
zero.

Since $n=1$, the zero divisor of a generic linear combination
$\lambda A+\mu B+\nu C$ consists of $\Delta$ and three Albanese fibers.
Thus
\[
 13H\equiv\Delta+3F
\]
numerically, and
\[
 H\cdot\Delta=13H^2-3H\cdot F=468-216=252.
\]
In particular, the common zero locus is nonempty, as asserted in
Theorem~\ref{thm:main}.

\section{Proving the cubic identity from reductions modulo primes}\label{sec:height-bound}

Set
\begin{equation}\label{eq:F39}
 \Phi_{39}=Y_{13}^{\,2}Z_{13}-X_{13}^{\,3}+3888Z_{13}^{\,3},
\end{equation}
a form of degree $39$.
By the recognition theorem, it remains to show that $\Phi_{39}$ vanishes on
$\CS$.  By Lemma~\ref{lem:normal-form-vanishing}, this is equivalent to
$R=\NF(\Phi_{39})=0$, where the normal form is taken with respect to the
$83$-element basis $\mathcal G$ of Section~\ref{sec:groebner}.  We bound the
coefficients of $R$ after clearing denominators, and calculate its
reductions modulo primes, without directly computing $R$ over $\QQ$.

\subsection{Why computations modulo primes suffice}\label{sec:membership-criterion}

Call a prime $p$ \emph{admissible} if it divides none of the denominators
of the coefficients in $\mathcal G$.  Since $\mathcal G$ is monic, for
admissible $p$ every reduction by $\mathcal G$ over $\QQ$ of a polynomial
with denominators prime to $p$ makes sense modulo $p$.  Using this to check 
 Buchberger's criterion \cite{CLO} it follows that
$\mathcal G\bmod p$ is a Gr\"obner basis of the ideal it generates, with
the same leading monomials (cf.~\cite{Arnold}).  We have
\[
 R\bmod p=\NF_{\mathcal G\bmod p}(\Phi_{39}\bmod p).
\]

We will obtain a positive integer $D$ for which
\[
 D R=\sum_\mu N_\mu s_\mu,\qquad N_\mu\in\ZZ,
\]
where the $s_\mu$ are standard monomials.  If $R$ vanishes modulo distinct
admissible primes $p_i$ that do not divide $D$, then every $N_\mu$ is divisible
by $\prod_i p_i$.  Consequently, $R=0$ once this product exceeds a bound for
the absolute values of the $N_\mu$.  This is a standard use of modular
arithmetic in polynomial calculations see
\cite[Ch.~5]{vonZurGathenGerhard} and \cite{Arnold}.

\subsection{Bounding the remainder}\label{sec:deterministic-rule}

For $f=\sum_m c_m m\in\QQ[U_0,\ldots,U_9]$, written in distinct
monomials, we use $\|f\|_1=\sum_m|c_m|$.
It is straightforward to compute that

\[ \|\Phi_{39}\|_1 <2^{387}. \]

Let $\mathcal M$ consist of all monomials obtained by multiplying terms in
the three products defining $\Phi_{39}$.  This set can be determined from
the monomials in $X_{13},Y_{13},Z_{13}$, without calculating the coefficients
of $\Phi_{39}$.  Write
\[
 \Phi_{39}=\sum_{m\in\mathcal M}a_m m,\qquad a_m\in\ZZ,
\]
allowing $a_m=0$ when terms cancel.  By linearity of normal form,
\[
 R=\sum_{m\in\mathcal M}a_m\NF(m),\qquad
 \|R\|_1\le\|\Phi_{39}\|_1
       \max_{m\in\mathcal M}\|\NF(m)\|_1.
\]
A common denominator for these monomial normal forms also clears the
denominators of $R$, since the $a_m$ are integers.

{\it Recursive bounds.}
Write a basis polynomial as
\[
 g=m_0+\sum_{\nu=1}^{t}\kappa_\nu m_\nu.
\]
Here $m_0$ is the leading monomial and the $\kappa_\nu$ are the nonzero
remaining coefficients.  If $m_0\mid m$, then
\begin{equation}\label{eq:nf-recurrence}
 \NF(m)=-\sum_{\nu=1}^{t}\kappa_\nu\NF(m_\nu'),
 \qquad m_\nu'=\frac{m}{m_0}\,m_\nu,
\end{equation}
and each $m_\nu'$ is smaller than $m$ in the monomial order.
For each monomial $m$, we compute integers $b(m)$ and $e_q(m)$
such that
\[
 \|\NF(m)\|_1\le2^{b(m)},\qquad
 D(m)\NF(m)\in\ZZ[U_0,\ldots,U_9],
 \qquad D(m)=\prod_q q^{e_q(m)}.
\]
Here $q$ ranges over the primes occurring in the denominators of
$\mathcal G$.  For standard monomials, take $b(m)=e_q(m)=0$.
For a nonstandard monomial, the reduction
\eqref{eq:nf-recurrence} gives the recurrences
\begin{equation}\label{eq:nf-rec}
\begin{aligned}
 e_q(m)&=\max_\nu\bigl(
   v_q(\operatorname{den}(\kappa_\nu))+e_q(m_\nu')\bigr),\\
 b(m)&=\lceil\log_2t\rceil+
   \max_\nu\bigl(\lceil\log_2|\kappa_\nu|\rceil+b(m_\nu')\bigr).
\end{aligned}
\end{equation}
Here $\operatorname{den}(\kappa_\nu)$ is its positive denominator
in lowest terms, and $v_q(d)$ is the exponent of $q$ in $d$.
The first recurrence gives a common denominator for the terms in
the sum; the second bounds their coefficient sum.  Since every $m_\nu'$ is smaller than $m$,
these estimates follow by induction.

The bounds depend on the choice of reduction, although the normal
form does not.  For the algorithm we give a rule that works reasonably well.  Given a monomial
$m$ that is  not  standard  there is at least one $g\in \mathcal{G}$ whose leading monomial divides
it.  If there is more than one of these, then  choose the $g$ by requiring that after replacing $m$ we get an expression 
where the  largest resulting monomial is as  small as possible. In case there  is a tie then 
choose among these  $g$ the one that gives  fewer resulting monomials.
If among these there is still a tie then choose
the $g$ that  appears first in the list  generators.

The program \file{nf\_bound83.cpp} computes each encountered monomial's
bounds once and stores only $b(m)$ and the denominator exponents $e_q(m)$,
rather than its normal form.  The set $\mathcal M$ has about $2.6$ million
elements, and the recursion visits about $25$ million monomials.

\subsection{The bounds and the cubic identity}

Put
\[
 B=\max_{m\in\mathcal M}b(m),\qquad
 d_{39}
   =\prod_q q^{\,\max_{m\in\mathcal M}e_q(m)}.
\]
The calculation gives the following bounds.

\begin{proposition}[Monomial bounds]\label{prop:monomial-height}
For every $m\in\mathcal M$,
\[
 \|\NF(m)\|_1\le2^{B},\quad B=852,\qquad
 d_{39}\NF(m)\in\ZZ[U_0,\ldots,U_9],
\]
where
\[
 d_{39}=2^{242}3^{116}5^{61}7^{55}11^{71}13^{71}17^{32}19^{62}23^{51}97^{35}103^{45}137^{71}
  \]

\end{proposition}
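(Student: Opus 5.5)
This is a computational statement, so the plan is to justify the recurrences \eqref{eq:nf-rec} by induction on the monomial order and then evaluate them by a memoized computation over $\mathcal M$.

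\textbf{Induction.} I will prove that for every monomial $m$, with $b(m),e_q(m)$ defined by the reduction rule of Section~\ref{sec:deterministic-rule},
\[
\|\NF(m)\|_1\le 2^{b(m)},\qquad \Bigl(\prod_q q^{e_q(m)}\Bigr)\NF(m)\in\ZZ[U_0,\ldots,U_9].
\]
The induction is Noetherian, on the graded reverse lexicographic order \eqref{eq:the-order}. This order is a well-order on monomials of fixed degree, and that degree set is finite, so the induction is well founded.

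For a standard monomial both claims are trivial with $b=e_q=0$. For a nonstandard $m$, the chosen $g$ gives \eqref{eq:nf-recurrence}, and every $m_\nu'$ is smaller than $m$. The triangle inequality gives
\[
\|\NF(m)\|_1\le\sum_\nu|\kappa_\nu|\,\|\NF(m'_\nu)\|_1\le t\cdot\max_\nu 2^{\lceil\log_2|\kappa_\nu|\rceil+b(m'_\nu)},
\]
and $t\le 2^{\lceil\log_2 t\rceil}$. For the denominators, each term $\kappa_\nu\NF(m'_\nu)$ has denominator dividing $\prod_q q^{v_q(\operatorname{den}\kappa_\nu)+e_q(m'_\nu)}$, so taking the maximum over $\nu$ of each prime exponent clears all of them. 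The only primes $q$ that occur are those dividing denominators of $\mathcal G$. This proves both claims for every $m$.

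\textbf{Evaluation.} It remains to compute $\max_{m\in\mathcal M}b(m)$ and $\max_{m\in\mathcal M} e_q(m)$ for each $q$. I will do this in the order below.
\begin{enumerate}
\item Read $\mathcal G$ with exact rational coefficients, and precompute for each basis element its leading monomial, its $t$, the values $\lceil\log_2|\kappa_\nu|\rceil$, and the valuations $v_q(\operatorname{den}\kappa_\nu)$. These give the list of primes $2,3,\ldots,137$.
\item Enumerate $\mathcal M$ by forming the supports of $Y_{13}^2Z_{13}$, $X_{13}^3$ and $Z_{13}^3$, deduplicated in a hash set. This is about $2.6$ million degree-$39$ monomials.
\item Evaluate $b$ and $e_q$ by depth-first recursion with a memo table keyed by exponent vectors, storing only $b(m)$ and the vector $(e_q(m))_q$. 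This is what \file{nf\_bound83.cpp} does.
\item Report $B=852$ and the exponents of $d_{39}$.
\end{enumerate}
All arithmetic in step 3 is on small integers (maxima and additions), so there is no rounding issue. The only real-valued operation, $\lceil\log_2|\kappa|\rceil$, is done exactly from the numerator and denominator bit lengths, adjusting by one when needed.

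\textbf{Main obstacle.} The difficulty is the scale, not the mathematics: about $25$ million distinct intermediate monomials are visited. Plain recursion could exceed the stack depth, so I would use an explicit stack, or process monomials in increasing monomial order. I would pack each exponent vector (degree $39$ in $10$ variables) into a 64-bit key to keep the memo table in memory.

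\textbf{Correctness checks.} The tie-breaking rule only needs to be deterministic; any choice of $g$ yields valid bounds. I would still test the program on small-degree monomials against exact \textsc{Magma} normal forms, confirming that the computed bounds dominate the true $\|\NF(m)\|_1$ and denominators.
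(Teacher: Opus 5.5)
Your proposal is correct and takes essentially the paper's approach: you justify the recurrences \eqref{eq:nf-rec} by induction along the monomial order using \eqref{eq:nf-recurrence}, and then obtain $B=852$ and the exponents of $d_{39}$ from the memoized evaluation over $\mathcal M$ carried out by \file{nf\_bound83.cpp}. One small point: $\mathcal M$ must consist of all products of monomials from the supports of $X_{13},Y_{13},Z_{13}$, keeping monomials whose coefficients cancel, and your hash-set enumeration of products does exactly this.
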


The second calculation reduces the polynomial modulo the chosen primes.

\begin{proposition}[Prime reductions]\label{prop:prime-reductions}
Let $p_1,\ldots,p_{68}$ be the $68$ largest primes below $2^{61}$ that are
congruent to $1$ modulo $3$.  They are
admissible, none divides $d_{39}$, and
\[
 \NF_{\mathcal G\bmod p_i}(\Phi_{39}\bmod p_i)=0
 \qquad(1\le i\le68).
\]
Their product satisfies
\begin{equation}\label{eq:prime-product-bound}
 T=\prod_{i=1}^{68}p_i>2^{4130}.
\end{equation}
\end{proposition}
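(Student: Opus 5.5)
The proposition is a finite computational certificate, so the plan is to split it into four checks and arrange each to be independently reproducible.

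\textbf{Choosing and certifying the primes.} First I enumerate primes downward from $2^{61}$. For each candidate $p\equiv1\pmod3$ I certify primality deterministically. Miller--Rabin with the first twelve prime bases is a proof below $3.3\cdot10^{24}$, which covers $2^{61}$. I stop after collecting $68$ such primes. The condition $p\equiv1\pmod 3$ is not logically needed for the membership argument, but it lets $\zetaThree$ live in $\mathbf F_p$, which is convenient for the equivariance checks.

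\textbf{Admissibility and $d_{39}$.} Every $p_i$ exceeds $2^{60}$. By Proposition~\ref{prop:monomial-height}, the primes $q$ that can occur in denominators are among those listed in $d_{39}$, all at most $137$. More precisely, the set of primes dividing denominators of coefficients of $\mathcal G$ is read off directly from the supplied basis. Hence no $p_i$ is a denominator prime, so each $p_i$ is admissible, and no $p_i$ divides $d_{39}$.

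\textbf{The product bound.} Each $p_i>2^{61}-c$, where $c$ is the gap to the $68$th prime, a few thousand multiples of $\log 2^{61}$. This gives $\log_2 p_i>60.99$, and so $\log_2 T>68\cdot 60.99>4147>4130$. To be safe I would simply compute $T$ exactly as an integer and compare it with $2^{4130}$.

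\textbf{The modular reductions (main obstacle).} For each $p_i$, I reduce the integer coefficients of $X_{13},Y_{13},Z_{13}$ modulo $p_i$ and map the entries of $\mathcal G$ to $\mathbf F_{p_i}$; the latter makes sense by admissibility. As explained in Section~\ref{sec:membership-criterion}, $\mathcal G\bmod p_i$ is then a Gröbner basis with the same leading monomials. To compute $\NF(\Phi_{39}\bmod p_i)$ cheaply, I avoid expanding the degree-$39$ product directly. Instead I compute $\NF(Y_{13}^2)$ in degree $26$, multiply by $Z_{13}$, and reduce again; I handle $X_{13}^3$ and $Z_{13}^3$ the same way. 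This is legitimate because $\NF$ is compatible with multiplication modulo the ideal. I then check that the resulting degree-$39$ normal form is zero. The cost is dominated by reduction in degree $39$, where $\dim\CR_{39}=18\cdot39^2-9\cdot39+1=27028$ standard monomials appear. This is manageable with a precomputed table of normal forms of nonstandard degree-$39$ monomials, built incrementally by multiplying degree-$38$ standard monomials by variables. The main difficulty is ensuring this table-driven reduction is correct and efficient across all $68$ primes. For a cross-check, I would run an independent implementation in \textsc{Magma} for a few primes.

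Combining these checks with Proposition~\ref{prop:monomial-height} gives the conclusion. The bound there gives $|N_\mu|\le d_{39}\cdot2^{387+852}$. Since $\log_2 d_{39}$ is at most about $2890$, this is below $2^{4130}<T$, which forces $R=0$.
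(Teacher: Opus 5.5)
Your plan is correct and is essentially the paper's. The paper gives no written argument for this proposition; it rests entirely on the supplied computations (the list \file{primes.txt}, the denominator primes of $\mathcal G$, and the modular normal-form reduction in \file{reduce\_cubic83.cpp} or \file{identities.m}), which is exactly what you propose, and $\log_2 T\approx 4148$ leaves ample room.

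There are two harmless numerical slips. First, twelve Miller--Rabin prime bases are proven sufficient only below about $3.2\cdot10^{23}$; thirteen are needed for $3.3\cdot10^{24}$. This is still far above $2^{61}$, so primality certification is unaffected. Second, in your closing estimate $\log_2 d_{39}\approx 2890.97$, not "about $2890$", so $d_{39}\,2^{387+852}<2^{4130}$ holds with a margin of only about $0.03$ bits and should be checked exactly.
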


\begin{remark}[Calculations]\label{rem:two-programs}
  As we have seen  we need two kinds of programs. The first computes  the bound and the second checks
  the cubic identity over primes. For the interested reader the programs are also provided in the supplement.
  The first one is the file \file{nf\_bound83.cpp} and that is already discussed above. The $68$ primes are in
  \file{primes.txt}.
  For the cubic identity over a prime, there is a \textsc{Magma} file \file{identities.m}.
  It is standard, however it is memory intensive using around
  $140$ GB and around twenty minutes per prime. The alternative is the \file{reduce\_cubic83.cpp} that runs on a normal laptop
  and is faster (around three minutes per prime).
 \end{remark}

\begin{theorem}[The cubic identity]\label{thm:cubic-identity}
In the coordinate ring $\CR$ over $\QQ$,
\begin{equation}\label{eq:cubic-identity}
 Y_{13}^{\,2}Z_{13}=X_{13}^{\,3}-3888Z_{13}^{\,3}.
\end{equation}
\end{theorem}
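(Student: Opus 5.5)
The identity will be established by showing that the normal form $R=\NF(\Phi_{39})$ with respect to $\mathcal G$ is zero. By Lemma~\ref{lem:normal-form-vanishing} this is equivalent to the vanishing of $\Phi_{39}$ on $\CS$, and hence to \eqref{eq:cubic-identity} in $\CR$. We will not compute $R$ over $\QQ$. Instead we combine the a priori bound of Proposition~\ref{prop:monomial-height} with the modular computations of Proposition~\ref{prop:prime-reductions}, following the criterion of Section~\ref{sec:membership-criterion}.

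The first step is to clear denominators. Since $X_{13},Y_{13},Z_{13}$ have integer coefficients, so does $\Phi_{39}=\sum_{m\in\mathcal M}a_m m$. By linearity, $R=\sum a_m\NF(m)$. Proposition~\ref{prop:monomial-height} gives $d_{39}\NF(m)\in\ZZ[U_0,\ldots,U_9]$ for every $m\in\mathcal M$, so
\[
 d_{39}R=\sum_\mu N_\mu s_\mu,\qquad N_\mu\in\ZZ,
\]
where the $s_\mu$ are standard monomials. The second step is to bound the $N_\mu$:
\[
 |N_\mu|\le d_{39}\|R\|_1\le d_{39}\,\|\Phi_{39}\|_1\max_{m\in\mathcal M}\|\NF(m)\|_1<d_{39}\,2^{387}\,2^{852}.
\]
To compare this with $T$, we estimate $\log_2 d_{39}$ from the explicit factorization, bounding each $\log_2 q$ above: for example $\log_2 3<1.585$, $\log_2 137<7.1$, and so on. Summing gives roughly $242+184+142+155+246+263+131+264+231+231+301+504\approx 2894$, comfortably below $4130-1239=2891+\ldots$. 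This is tight, so the arithmetic has to be redone carefully with rigorous upper bounds for the logarithms. The aim is the inequality $d_{39}2^{1239}<2^{4130}<T$, and if the margin turns out to be too thin, one adjusts by using exact integer comparison instead of logarithms.

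The third step transfers the modular vanishing. Each $p_i$ is admissible, so $\mathcal G\bmod p_i$ is a Gr\"obner basis with the same leading monomials, and $R\bmod p_i=\NF_{\mathcal G\bmod p_i}(\Phi_{39}\bmod p_i)=0$. Since $p_i\nmid d_{39}$, the element $d_{39}$ is invertible modulo $p_i$. Hence every $N_\mu\equiv0\pmod{p_i}$, and because the $p_i$ are distinct primes, $T\mid N_\mu$. Together with $|N_\mu|<T$ from the second step, this forces $N_\mu=0$. Therefore $R=0$, and Lemma~\ref{lem:normal-form-vanishing} gives the identity.

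The main obstacle is not the logic, which is routine, but the reliability of the inputs: the correctness of the recursive bound computation \eqref{eq:nf-rec}, and in particular the claim that every step strictly decreases in the well-order so that the induction is valid, and the precise numerical comparison of $d_{39}2^{1239}$ with $T$. The latter I would verify by exact big-integer arithmetic rather than by logarithm estimates.
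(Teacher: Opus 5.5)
Your proposal is correct and is essentially the paper's proof: clear denominators with $d_{39}$, bound each $|N_\mu|$ by $d_{39}\|\Phi_{39}\|_1 2^{852}<2^{4130}<T$, and conclude from $T\mid N_\mu$ that $R=0$. One correction to your numerical aside: rounding every term up gives $2894$, which is above, not below, $2891$, so ``comfortably below'' is wrong as written; the accurate value is $\log_2 d_{39}\approx 2890.97$, so $d_{39}2^{1239}<2^{4130}$ does hold but with only about $0.03$ bits to spare, whereas comparing directly with $T\approx 2^{4148}$ (the product of $68$ primes just below $2^{61}$) leaves ample room.
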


\begin{proof}
We see that $d39\cdot R$ has only integer coefficient. Each coefficient  $N_\mu$
satisfies
\[
 |N_\mu|\leq d_{39}\,\|\Phi_{39}\|_1
                  \max_{m\in\mathcal M}\|\NF(m)\|_1
           < 2^{4130}.
\]
By Proposition~\ref{prop:prime-reductions} and the compatibility of reduction in
Section~\ref{sec:membership-criterion}, all these integers are divisible by
$T >2^{4130}$ and hence vanish.  Thus $R=\NF(\Phi_{39})=0$, proving
the identity.
\end{proof}

\begin{proof}[Proof of Theorem~\ref{thm:main}]
Proposition~\ref{prop:interface} gives the degree, symmetry and nonvanishing
conditions for the three polynomials.  The cubic identity verifies the
remaining hypothesis of Theorem~\ref{thm:recognition}, which gives the Albanese
map and its equivariance.  The common-divisor calculation in
Section~\ref{sec:recognition} gives the asserted nonempty common zero locus.
\end{proof}

Before turning to how the polynomials were found, we use the formula to
locate the singular fibers.

\section{The three nodes and their critical values}\label{sec:singular-fibers}

We now determine the critical points of $\alpha$ and their images on $E$.

\begin{theorem}[The three nodes]\label{thm:singular-fibers}
The critical points of $\alpha$ are
\begin{equation}\label{eq:node-points}
 P_\theta=[\,2:1:3:12:0:0:0:-\theta:-\theta:2\theta\,],
 \qquad \theta^3=1116.
\end{equation}
Their critical values are
\begin{equation}\label{eq:critical-target}
\begin{gathered}
 \alpha(P_\theta)=[\,q_{\mathrm{crit}}\theta:y_{\mathrm{crit}}:1\,],\\
 q_{\mathrm{crit}}=\frac{6267859}{1459^2},
 \qquad y_{\mathrm{crit}}=\frac{487135434066}{1459^3}.
\end{gathered}
\end{equation}
The three points lie on distinct fibers, and each is an ordinary node of
its fiber.  These are the only singular fibers of $\alpha$.
\end{theorem}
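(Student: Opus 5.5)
We first verify the points and their images, then show they are critical and nondegenerate, and finally use an Euler-characteristic count to exclude all other critical points.

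\textbf{Step 1: the points lie on $\CS$.}
Substitute $P_\theta$ into the $85$ Borisov--Yeung equations, or equivalently into $\mathcal G$, and reduce in $\QQ(\theta)=\QQ[\theta]/(\theta^3-1116)$. Since $\sigma$ multiplies $U_7,U_8,U_9$ by $\zetaThree^2$ and fixes $U_0,\dots,U_3$, we have $\sigma(P_\theta)=P_{\zetaThree^2\theta}$. Hence the three points form one $\sigma$-orbit on the $\sigma$-invariant line spanned by $[2:1:3:12:0:\cdots:0]$ and $[0:\cdots:0:-1:-1:2]$, and it suffices to work over the cubic field $\QQ(\theta)$ with a single real root.

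\textbf{Step 2: the critical values.}
Evaluate $X_{13},Y_{13},Z_{13}$ at $P_\theta$ in $\QQ(\theta)$ and check that $Z_{13}(P_\theta)\ne0$. The quotients should then give \eqref{eq:critical-target}. The character conditions force $x$ to lie in $\QQ\theta$ times the weight-$2$ part and $y\in\QQ$, which is consistent with the stated shape $[q_{\mathrm{crit}}\theta:y_{\mathrm{crit}}:1]$. We also check $y_{\mathrm{crit}}^2=q_{\mathrm{crit}}^3\cdot1116-3888$ as a sanity test. Because $\sigma_E$ multiplies $x$ by $\zetaThree^2$ and $q_{\mathrm{crit}}\ne0$, the three values are distinct, so the points lie on distinct fibers.

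\textbf{Step 3: criticality and the node condition.}
Work in the affine chart $U_3=1$ near $P_\theta$. The tangent plane $T_{P}\CS$ is the kernel of the Jacobian of the equations, a $2$-dimensional space; confirming that the rank is $7$ in the $9$ affine coordinates also confirms smoothness. Criticality means the differential of $x=X_{13}/Z_{13}$ vanishes on $T_P\CS$. Since $E$ is smooth and $Z_{13}\ne0$, $x-x(P)$ is a local coordinate on $E$ near $\alpha(P)$ as long as $y(P)\ne0$, which holds since $y_{\mathrm{crit}}\ne0$. For the node, compute the second-order Taylor expansion of $x$ restricted to $\CS$. To do this, pick two local coordinates $u,v$ among the $U_i$ whose differentials span $T_P^*\CS$, solve the defining equations to second order for the other coordinates, and form the $2\times2$ Hessian of $x$ in $(u,v)$. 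The requirement is a nonzero determinant. All of this is exact arithmetic in $\QQ(\theta)$, and since $\sigma$ permutes the points it suffices to do it for one root.

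\textbf{Step 4: no other critical points.}
For a fibration of a surface over an elliptic curve with general fiber of genus $g$, we have $e(\CS)=e(E)e(F)+\sum_s(e(F_s)-e(F))=\sum_s(e(F_s)-e(F))$. Each fiber is irreducible and reduced by Proposition~\ref{prop:published-geometry}(v), and every singular fiber contributes a positive amount (at least the Milnor number at each singular point), with an ordinary node contributing exactly $1$. Since $e(\CS)=3$ and we have already found three nodes on three distinct fibers, the total is exhausted: no other singular fiber exists, and those three fibers have no further singular points.

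\textbf{Main obstacle.}
I expect the main obstacle to be Step 3. The sizes of $X_{13},Y_{13},Z_{13}$ make the second-order expansion over $\QQ(\theta)$ costly. I would first simplify by reducing the polynomials modulo the ideal of the point, or compute modulo several primes splitting $\theta^3-1116$ to obtain a fast certificate that the Hessian determinant is nonzero. A nonzero value mod a single good prime already suffices for nonvanishing.
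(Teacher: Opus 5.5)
Your proposal is correct and follows the paper's proof essentially step for step. The steps are: substitution to place $P_\theta$ on $\CS$; evaluation of the formula over $\QQ(\theta)$ for the critical values and their distinctness; vanishing of the differential plus a nonzero $2\times2$ Hessian determinant in local coordinates for the nodes; and the Euler-characteristic count with reduced fibers to exclude further critical points. The differences are cosmetic: you use $x$ as the local coordinate on $E$ (valid since $y_{\mathrm{crit}}\ne0$) and the chart $U_3=1$, whereas the paper uses $y$ (valid since $q_{\mathrm{crit}}\theta\ne0$) and the chart $U_0=1$ with $u=U_8+\theta/2$, $v=U_9-\theta$.
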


\subsection{The three points on a line}\label{sec:nodal-line}

Consider the projective line $L$ parametrized by
\begin{equation}\label{eq:nodal-line}
 P(t)=[\,2:1:3:12:0:0:0:-t:-t:2t\,],\qquad t\in\CC,
\end{equation}
together with its point at infinity
$B_\infty=[\,0:0:0:0:0:0:0:-1:-1:2\,]$.
The line $L$ is $\sigma$-invariant, with
\[
 \sigma(P(t))=P(\zetaThree^2t).
\]

Substitution into the $85$ surface equations gives $78$ zero polynomials
and seven nonzero rational multiples of $t^3-1116$.  The point $B_\infty$
does not lie on the surface.  Since $t^3-1116$ has three distinct roots,
$L\cap\CS$ consists of precisely the three points \eqref{eq:node-points},
which $\sigma$ permutes cyclically.

\subsection{The local singularities}\label{sec:node-criticality}

By equivariance, it suffices to work at the real point $P=P_\theta$,
where $\theta=\sqrt[3]{1116}>0$.  Substitution gives $Z_{13}(P)\ne0$
and the image in \eqref{eq:critical-target}.  Since
$x(\alpha(P))=q_{\mathrm{crit}}\theta\ne0$, the function $y-y_{\mathrm{crit}}$
is a local coordinate on $E$.  Its pullback is
\[
 g=\frac{Y_{13}}{Z_{13}}-y_{\mathrm{crit}}.
\]
In the affine chart $U_0=1$, where $U_8(P)=-\theta/2$ and $U_9(P)=\theta$,
the Jacobian at $P$ has rank seven, with a nonzero $7\times7$ minor in the
$U_1,\ldots,U_7$ columns.  Thus
\[
  u=U_8+\theta/2,\qquad v=U_9-\theta
\]
are local holomorphic coordinates centered at $P$.  The supplement
(script \file{node\_local.m}) verifies, exactly over $\QQ(\theta)$, that
$dg$ vanishes on $T_P\CS$ and that minus the determinant of the Hessian of
$g|_{\CS}$ at $P$ in these coordinates is a nonzero square.  The Hessian is
\begin{equation}\label{eq:node-hessian}
\begin{gathered}
 \operatorname{Hess}_{(u,v)}(g|_{\CS})(P)
 =-\frac{4q_{\mathrm{crit}}^{\,2}\theta}{899}
   \begin{pmatrix}164&113\\113&72\end{pmatrix},\\
 \det\operatorname{Hess}_{(u,v)}(g|_{\CS})(P)
 =-\left(\frac{4q_{\mathrm{crit}}^{\,2}\theta}{29}\right)^2\ne0.
\end{gathered}
\end{equation}
The holomorphic Morse lemma therefore gives local holomorphic
coordinates centered at $P$ in which $g=z_1z_2$, so the fiber has an
ordinary node.

\begin{proof}[Completion of the proof of
Theorem~\ref{thm:singular-fibers}]
Let $F$ be a regular fiber.  Since every fiber is reduced by
Proposition~\ref{prop:published-geometry}\textup{(v)}, the
Euler-characteristic formula expresses
\[
  e(\CS)-e(E)e(F)=3
\]
as a sum of positive local contributions from the singular points
of the fibers.  Each ordinary node contributes one. The three nodes already found have distinct
images by \eqref{eq:critical-target} and contribute three in total.
There are therefore no further critical points or singular fibers.
\end{proof}

\begin{corollary}[One real node]\label{cor:one-real-node}
Exactly one of the three singular fibers is preserved by complex
conjugation; the other two are exchanged.  The real fiber has its node
at $P_\theta$ with $\theta=\sqrt[3]{1116}>0$, and its real locus near the
node consists of two branches crossing transversely.
\end{corollary}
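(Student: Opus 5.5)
The corollary has two parts: which singular fibers are real, and what the real picture looks like at the real node. I will get the first from the action of conjugation on the critical values, and the second from the real form of the Hessian \eqref{eq:node-hessian}.

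For the first part, use the antiholomorphic involution $\iota$ induced by complex conjugation. The equations of $\CS$ and the polynomials $X_{13},Y_{13},Z_{13}$ have rational coefficients, so $\alpha\circ\iota=c\circ\alpha$, where $c$ is complex conjugation on $E$. Hence $\iota$ maps the fiber $\alpha^{-1}(x)$ onto $\alpha^{-1}(\bar x)$. It therefore suffices to see how conjugation permutes the three critical values in \eqref{eq:critical-target}. These are $[q_{\mathrm{crit}}\theta:y_{\mathrm{crit}}:1]$, where $q_{\mathrm{crit}}$ and $y_{\mathrm{crit}}$ are rational and $\theta$ runs over the three cube roots of $1116$. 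Conjugation fixes the real root $\theta_0=\sqrt[3]{1116}$ and swaps the two complex roots $\zetaThree\theta_0$ and $\zetaThree^2\theta_0$. Since $q_{\mathrm{crit}}\ne0$, the three values are distinct. So exactly one critical value is real, and the other two are exchanged. Consequently exactly one singular fiber is $\iota$-invariant, and by Theorem~\ref{thm:singular-fibers} its node is $P_{\theta_0}$, which is a real point because all of its coordinates are real.

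For the local real structure, work in the coordinates $u=U_8+\theta/2$ and $v=U_9-\theta$ of Section~\ref{sec:node-criticality}. With $\theta=\theta_0$ real, these are real-analytic coordinates on the real locus near $P$. Indeed, the nonzero $7\times7$ minor has real entries, so the implicit function theorem expresses $U_1,\dots,U_7$ as real-analytic functions of $(u,v)$ on $\CS(\mathbb R)$. In particular the real locus is a smooth real surface near $P$. The function $g=Y_{13}/Z_{13}-y_{\mathrm{crit}}$ is real on this locus. It has a critical point at $P$, and its Hessian is a real multiple of $\begin{pmatrix}164&113\\113&72\end{pmatrix}$. The determinant of that matrix is $164\cdot72-113^2=11808-12769=-961=-31^2<0$, so the real quadratic form is indefinite and nondegenerate.

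By the real Morse lemma, $g=\xi_1\xi_2$ in suitable real coordinates near $P$. Thus the real fiber $\{g=0\}$ near $P$ consists of two smooth real branches crossing transversely. Explicitly, the two branches are tangent to the two real roots of $164u^2+226uv+72v^2=0$, and these are real and distinct because the discriminant $113^2-164\cdot72=961$ is positive.

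The step most in need of care is justifying that $u,v$ serve as real coordinates on the real locus, together with the sign bookkeeping for the scalar factor in front of the Hessian. The factor $-4q_{\mathrm{crit}}^2\theta_0/899$ is real and nonzero, so it cannot change whether the form is indefinite; only the determinant's sign matters. This is consistent with $-\det$ being a nonzero square, as noted after \eqref{eq:node-hessian}.
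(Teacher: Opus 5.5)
Your proposal is correct and follows the paper's argument. The real cube root of $1116$ gives one real critical value and a conjugate pair, and at the real node $u,v$ are real coordinates in which the Hessian \eqref{eq:node-hessian} has negative determinant, so the real Morse lemma gives two transverse real branches; your extra details (distinctness via $q_{\mathrm{crit}}\ne0$, the real implicit function theorem, and $164\cdot72-113^2=-31^2$) fill in steps the paper leaves implicit.
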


\begin{proof}
Since $t^3-1116$ has exactly one real root, formula
\eqref{eq:critical-target} gives one real critical value and a conjugate
pair.  At the real point, $u,v$ are real local coordinates and the
determinant in \eqref{eq:node-hessian} is negative.  The real Morse
lemma gives $g=r^2-s^2$, so the real fiber has two transverse branches.
\end{proof}

The points were originally found by a search over some primes on
$\CS/\langle\sigma\rangle$, where they have a common image.
Reconstructing this quotient point over $\QQ$ and lifting it to $\CS$
yielded the coordinates in \eqref{eq:node-points}.  The verification
above is much simpler.

\section{How the formula was found}\label{sec:search-strategy}

The proof in Sections~\ref{sec:canonical-model}--\ref{sec:height-bound}
of course does not depend on how the polynomials were found.  But it
might be interesting to see the method.  This section gives a short
account following Rito's construction of the invariant fiber $F_1$ and
the degree-$9$ pencil \cite[Sect.~4.1]{Rito}.

Write
$I=I_{\CS}$ and $\CR=S/I$, let $\mathcal A$ be the elliptic base, and
let $p_1\in\mathcal A$ be the image of $F_1$, taken as the origin.

Every step computes a space of sections
$H^0(\CS,\cO_{\CS}(dH-D))$ for some divisor $D$, as a
subspace of $\CR_d=H^0(\CS,\cO_{\CS}(dH))$ (Section~\ref{sec:groebner},
$d\ge3$).  Vanishing along $D$ is imposed by a multiplication equation.
Let $s,t\in\CR$ be nonzero homogeneous sections with known divisors, and
suppose that $e=d+\deg s-\deg t\ge3$.  The sections $f\in\CR_d$ for which
\[
 fs=gt\quad\text{in }\CR
\]
has a solution $g\in\CR_e$ are exactly those satisfying
\[
 \operatorname{div}(f)+\operatorname{div}(s)\ge\operatorname{div}(t).
\]
Indeed, this inequality makes $fs/t$ a global section of $eH$, and
Section~\ref{sec:groebner} gives its polynomial representative.  In the
standard-monomial bases this is a linear system for the coefficients of
$f$ and $g$.  All these systems can be solved modulo primes, and then one
can try to lift them to $\QQ$
\cite[Sect.~2]{Rito}, \cite[Ch.~5]{vonZurGathenGerhard}.

Most of the steps are in Rito's work:
Borisov identified the hyperplane section $U_1=0$ as the union of two
curves $E_1,E_2$ of degree $18$ and arithmetic genus $8$
(see \cite[Sect.~4.1]{Rito}, \cite{Keum}), so $H\equiv E_1+E_2$.
Rito gives the ideal for both of them and shows that $F_1$ is the unique
element of $|5E_2-E_1|$; equivalently, there is a quintic $q_5$, unique
up to a scalar, with
\[
 \operatorname{div}(q_5)=6E_1+F_1.
\]

The quintics vanishing on $F_1$ are the sections $f\in\CR_5$ for which
\[
 fU_1^6=q_5g,\qquad g\in\CR_6,
\]
is solvable.  Let $W_5$ denote the corresponding space.

Rito also constructs over a prime field two degree-$9$ forms $P_9$ and
$Q_9$ with
$\operatorname{div}(Q_9)=2F_1+C_0$ and $\operatorname{div}(P_9)=C_0+F_2+F_3$
for some divisor $C_0$, so their ratio $x_0=P_9/Q_9$ gives a pencil
through $2F_1$ and $F_2+F_3$.  On the base this is the expected function:
the functions on $\mathcal A$ with at most a double pole at $p_1$ and no
other poles are spanned by $1$ and a function $x_0$ of character two;
since $x_0$ vanishes at the other two fixed points $p_2,p_3$ of $\sigma$,
its divisor is $p_2+p_3-2p_1$, and the corresponding fibers are $F_1$,
$F_2$ and $F_3$.
A pair of degree-$9$ forms over $\QQ$ is given in the files
{\it P9.txt, Q9.txt}.

\subsection{The second coordinate and the cubic}\label{sec:sextic-twist-paper1}

The functions with at most a triple pole at $p_1$ form a
three-dimensional space, and a third function $y_0$ in it,
independent of $1$ and $x_0$, has pole order exactly three.  A common
denominator for $x_0$ and $y_0$ must vanish at least three times
along $F_1$.  Choose a nonzero invariant quintic
\[
 f_0\in W_5\cap\CR_5^{(0)},\qquad
 \operatorname{div}(f_0)=F_1+R_0.
\]
Then
\[
 Z_0=Q_9f_0,\qquad X_0=P_9f_0
\]
have characters zero and two, respectively, and
\[
 \operatorname{div}(Z_0)=3F_1+\Delta_{14},\qquad
 \Delta_{14}=C_0+R_0.
\]
Here $H\cdot\Delta_{14}=288$.
The degree-$14$ sections vanishing along $\Delta_{14}$ are  pullbacks
of the sections of $\cO_{\mathcal A}(3p_1)$; they form a
three-dimensional space containing $Z_0$ and $X_0$, and an invariant
third element $Y_0$ gives $y_0=Y_0/Z_0$.

An additional issue is to find the shape of the elliptic curve over $\QQ$.
Note that the $3$-fold symmetry implies that the $j$-invariant is $0$.
Over $\QQ$ the computation can be shortened with some guesswork.
Work in the family
\[
 y^2=x^3+d
\]
with $d\in \ZZ$ and assume that the only possible bad primes are $2$
and $3$.  Then there are only a few $d$ to check.  Eventually
\[
 d = -3888= -2^4\cdot3^5
\]
works.

The degree-$13$ representatives were found from the linear system
\begin{equation}\label{eq:degree13-system}
 X'Q_9-Z'P_9=0\quad\text{in }\CR_{22},\qquad
 Y'Z_0-Z'Y_0=0\quad\text{in }\CR_{27}.
\end{equation}
Every nonzero triple in the kernel represents the same map, so the
remaining freedom was used to make the coefficients small.  The first
triples found were rather large and had coefficients with more than $700$
decimal digits.  In the hope of getting smaller formulas one can try a few
obvious ideas.  These include changing the monomial order for the
Gr\"obner basis, trying to lower the degree of the polynomials and
searching in the family of triples.  All of these help to some degree,
and the final (degree $13$) polynomials have coefficients using at most
$37$ decimal digits.

The pair $P_9,Q_9$ and the triple $X_{13},Y_{13},Z_{13}$ are compatible
in the following sense.

\begin{remark}[Degree-$9$/degree-$13$ ratio identity]\label{rem:degree9-degree13-ratio}
In the coordinate ring $\CR$ over $\QQ$,
\begin{equation}\label{eq:degree9-degree13-ratio}
 X_{13}Q_9-Z_{13}P_9=0.
\end{equation}
Consequently, as rational functions on $\CS$,
$P_9/Q_9=X_{13}/Z_{13}=x$.  This can be checked along the lines of
Section~\ref{sec:height-bound}.
\end{remark}

\end{document}